\newtheorem{teo}{Theorem}
\newtheorem{prop}{Proposition}
\newtheorem{coro}{Corollary}
\newtheorem{lem}{Lemma}
\newtheorem{note}{Note}
\def\@seccntformat#1{\@ifundefined{#1@cntformat}%
{\csname the#1\endcsname\quad}
{\csname #1@cntformat\endcsname}
}
\def\section@cntformat{\thesection. \quad}
\def\subsection@cntformat{\thesubsection . \quad}
\begin{document}

\def \ni {\noindent}
\allsectionsfont{\mdseries\itshape\centering}

\title{A new geometric description for Igusa's modular form $(azy)_5$}
\author{A. Fiorentino}
\date{}
\maketitle

\begin{abstract}
\ni The modular form $(azy)_5$ notably appears in one of Igusa's classic structure theorems as a generator of the ring of full modular forms in genus $2$, being exhibited by means of a complicated algebraic expression. In this work a different description for this modular form is provided by resorting to a peculiar geometrical approach.
\end{abstract}

\medskip

\medskip

\section{Definitions and Notations}
The symplectic group $Sp(2g,\mathds{R})$ acts biholomorphically and transitively on the \emph{Siegel upper half-plane} $\mathfrak{S}_g$, namely the tube domain of complex symmetric $g \times g$ matrices with positive definite imaginary part; its action is defined by:\\

\begin{equation}
\label{act}
\begin{array}{ccc}
\gamma \cdot \tau \coloneqq (a \tau + b) \cdot (c \tau + d)^{-1} & & \gamma = \begin{pmatrix} a & b \\ c & d \end{pmatrix} \in Sp(2g,\mathds{R})
\end{array}
\end{equation}\\

\ni Furthermore, for each $k \in \mathds{Z}$ the non-vanishing function:
\begin{equation*}
 D(\gamma , \tau)^k \coloneqq {\det{(c \tau + d)}}^k \quad \quad \forall \, (\gamma, \tau) \in Sp(2g,\mathds{R}) \times \mathfrak{S}_g
\end{equation*} is a factor of automorphy, \emph{i.e.} a holomorphic function on $\mathfrak{S}_g$ satisfying the cocycle condition:
\begin{equation}
\label{cocycle}
D(\gamma \gamma' , \tau)^k = D(\gamma , \gamma' \tau)^k \, D(\gamma', \tau)^k \quad \, \forall \gamma , \gamma' \in Sp(2g, \mathds{R})
\end{equation} Thanks to this property, an action of $Sp(2g,\mathds{R})$ is also well defined on the space of holomorphic functions on $\mathfrak{S}_g$ for each $k \in \mathds{Z}$ by means of the rule:\\

\begin{equation}
\label{azf1}
(\gamma|_kf) (\tau) \coloneqq D(\gamma^{-1}, \tau)^{-k} \, f(\gamma^{-1} \tau)
\end{equation}\\

The action of the so-called {\it Siegel modular group} $\Gamma_g \coloneqq Sp(2g, \mathds{Z})$ is particularly central to the theory of modular forms; a \emph{Siegel modular form}, or simply a \emph{modular form}, of weight $k \in \mathds{Z}^+$ with respect to a subgroup $\Gamma$ of finite index in $\Gamma_g$ is a holomorphic function $f: \mathfrak{S}_g \rightarrow \mathds{C}$ satisfying the property \footnote{When $g=1$ a modular form is also demanded to be holomorphic on the cusp $\infty$.}:
\begin{equation}
\label{modularity}
\gamma^{-1}|_kf = f \quad \quad \forall  \gamma \in \Gamma
\end{equation}

\ni Modular forms with respect to a subgroup $\Gamma$ of finite index in $\Gamma_g$ form a ring $A(\Gamma)$ which is positively graded by the weights. \\

\ni More generally, whenever $\chi$ is a character of $\Gamma$, a modular form with weight $k \in \mathds{Z}^+$ and character $\chi$ is a holomorphic function $f: \mathfrak{S}_g \rightarrow \mathds{C}$ satisfying:
\begin{equation*}
\label{modularity}
\gamma^{-1}|_kf = \chi(\gamma) \, f \quad \quad \forall  \gamma \in \Gamma
\end{equation*} \\

\ni As for subgroups of finite index in $\Gamma_g$, they are, in fact, characterized by containing for some $n \in \mathds{N}$ the so-called \emph{principal congruence subgroup of level $N$}:
\begin{equation*}
\Gamma_g(n) = \{ \gamma \in \Gamma_g \mid \gamma \equiv 1_{2g} \, \text{mod} \, n  \}
\end{equation*} which is of finite index itself and normal in $\Gamma_g$; remarkable families of such subgroups are:
\begin{equation*}
\begin{array}{l}
\Gamma_g(n,2n) \coloneqq \{ \, \gamma \in \Gamma_g (n) \mid \, \, \, diag(a{}^t b) \equiv  diag(c{}^t d) \equiv 0 \, \text{mod} \, 2n \, \}\\
\\
\Gamma_{g,0}(n) \coloneqq \{ \, \gamma \in \Gamma_g \mid \, \, \, c \equiv 0 \, \text{mod} \,  n \, \}
\end{array}
\end{equation*} \\

\medskip

An outstanding role in constructing modular forms is actually played by {\it Riemann Theta functions with characteristics}; for each $m=(m' , m'') \in \mathds{Z}^g \times \mathds{Z}^g$ they are defined on $\mathfrak{S}_g \times \mathds{C}^g$ by the series:
\begin{equation*}
\theta_{m}(\tau , z) \coloneqq \sum_{n \in \mathds{Z}^g} \text{exp} \left \{ {}^t \left ( n + \frac{m'}{2} \right ) \tau \left ( n + \frac{m'}{2} \right ) + 2 {}^t \left ( n + \frac{m'}{2} \right ) \left ( z + \frac{m''}{2} \right ) \right \}
\end{equation*} where $\text{exp}(w)$ stands for $e^{\pi i w}$. Since $\theta_{m+2n}(\tau , z) = (-1)^{{}^tm'n''} \theta_{m} (\tau , z )$, these functions are parametrized merely by means of reduced {\it $g$-characteristics}, namely vector columns $\begin{bmatrix} m' \\ m'' \end{bmatrix}$ with $m', m'' \in \mathds{Z}_{2}^g$.\\
\ni Throughout this paper reduced $g$-characteristics will be simply referred to as $g$-characteristics, their set being henceforward conventionally denoted by the symbol $C^{(g)}$. For each $m \in C^{(g)}$ the {\it Theta constant} $\theta_m$ with $g$-characteristic $m$ is defined by setting:
\begin{equation*}
\theta_m (\tau) \coloneqq \theta_m(\tau , 0)
\end{equation*}
\ni The only non-vanishing Theta constants are plainly seen to be those related to even characteristics, namely characteristics $m$ satisfying $(-1)^{^t m' m''}=1$. More precisely, a parity function is defined on $g$-characteristics by:
\begin{equation*}
e(m) \coloneqq (-1)^{^t m' m''} \quad \quad \forall \, m=\begin{bmatrix} m' \\ m'' \end{bmatrix} \in C^{(g)}
\end{equation*} thus dividing them into {\it even} and {\it odd} respectively if $e(m)=1$ or $e(m)=-1$;  there are then $2^{g-1}(2^g+1)$ distinct non-vanishing Theta constants for each $g \geq 1$, each being related to an even $g$-characteristic, while the remaining $2^{g-1}(2^g-1)$ Theta constants that are associated to the odd $g$-characteristics are trivial functions. Henceforward, the symbols $C_e^{(g)}$ and $C_o^{(g)}$ will stand respectively for the set of even $g$-characteristics and the set of odd $g$-characteristics.\\

\ni An action of the group $\Gamma_g$ is well defined on $C^{(g)}$ by:
\begin{equation}
\label{action}
\gamma \begin{bmatrix} m' \\ m'' \end{bmatrix} \coloneqq \left [ \begin{pmatrix} d &  -c \\ - b &  a\end{pmatrix} \begin{pmatrix} m' \\ m'' \end{pmatrix} + \begin{pmatrix} diag(c{}^t  d ) \\ diag( a{}^t  b)  \end{pmatrix} \right ] mod \, 2 
\end{equation} $\Gamma_g$ acts both on $C_e^{(g)}$ and on $C_o^{(g)}$, the parity being preserved, and the action of the subgroup $\Gamma_g(2)$ is, in particular, trivial. The action of $\Gamma_g$ on $k$-plets of even $2$-characteristics is also worth being briefly highlighted; when $g=2$, a group isomomorphism between $\Gamma_2/\Gamma_2(2)$ and the symmetric group $S_6$ is naturally defined:
\begin{equation}
\label{isomorphismtoS6}
\psi_P : \Gamma_2 / \Gamma_2(2) \mapsto S_6
\end{equation}
by merely focusing on the action of $\Gamma_2 / \Gamma_2(2)$ on the six elements of $C_o^{(2)}$. By pointing to the action of $S_6$ on non ordered $k$-plets of even $2$-characteristics, the set of non-ordered $3$-plets of even $2$-characteristics, in particular, is found to decompose into two orbits (cf. \cite{Structure}):\\
\begin{equation*}
\begin{array}{l}
 C_{3}^{-}= \{ \, \{m_1, m_2, m_3\} \, \, \, | \, \, \, e(m_1 + m_2 + m_3)=-1 \, \} \\
\\
 C_{3}^{+}= \{ \, \{m_1, m_2, m_3\} \, \, \, | \, \, \, e(m_1 + m_2 + m_3)=1  \, \}
\end{array}
\end{equation*} \\

\ni with $card(C_{3}^{-}) = card(C_{3}^{+}) = 60$, while non-ordered $4$-plets of even $2$-characteristics decompose into three orbits $C_4^+$, $C_4^{\,*}$ and $C_4^-$ (cf. \cite{Structure}), where, in particular:\\

\begin{equation*}
\begin{array}{l}
 C_{4}^{-}= \{ \, \{m_1, \dots, m_4\} \, \, \, | \, \, \{m_i, m_j, m_k\} \in C_{3}^{-}\, \, \, \forall   \{m_i, m_j, m_k\} \subset  \{m_1, \dots, m_4\} \, \} \\
\\
 C_{4}^{+}= \{ \, \{m_1, \dots , m_4\} \, \, \, | \, \, \{m_i, m_j, m_k\} \in C_{3}^{+} \, \, \, \forall   \{m_i, m_j, m_k\} \subset  \{m_1, \dots, m_4\} \, \}
\end{array}
\end{equation*} \\

\ni with  $card(C_{4}^{-}) = card(C_{4}^{+}) = 15$.\\

\ni With reference to the actions described in (\ref{act}) and (\ref{action}), the transformation law for Riemann Theta function can be now outlined; for each $m \in C^{(g)}$ and $\gamma \in \Gamma_g$ one has (cf. \cite{Igusa10} or \cite{Igusa2}):

\begin{equation*}
\label{generaltranformtheta}
\begin{array}{l}
\theta_{\gamma m} (\gamma \tau \, , {}^t(c \tau + d)^{-1} z) = \kappa (\gamma) \, \chi_m(\gamma) \, \phi(\gamma, \tau, z) \, {\det{(c \tau + d)}}^{\frac{1}{2}} \theta_m (\tau, z) \\
\\
\forall \tau \in \mathfrak{S}_g \quad \forall z \in \mathds{C}^g
\end{array}
\end{equation*}\\

\ni where:

\begin{enumerate}
\item $\kappa$ is such a function that $\kappa (\gamma )^4 = exp \{ Tr({}^tbc) \}$ whenever $\gamma \in \Gamma_g$ and, more particularly, $\kappa (\gamma)^2 = exp \left \{ \frac{1}{2} Tr(a-1_g) \right \}$ whenever $\gamma \in \Gamma_2(2)$;
\item $\chi_m (\gamma) = \text{exp} \{ 2 \xi_m(\gamma) \}$ with: 
\begin{equation*}
\begin{split}
\xi_m(\gamma)  = & -\frac{1}{8} ({}^t m ' {}^tb d m' + {}^t m '' {}^t ac m'' - 2{}^t m ' {}^t b c m'') + \\
                                         &  -\frac{1}{4} {}^tdiag(a{}^t b)(dm ' - c m '')
\end{split}
\end{equation*}
\item $\phi (\gamma, \tau, z) = \text{exp} \left \{ \frac{1}{2} {}^t z \,  [(c \tau + d)^{-1} c] \, z \right \}$;
\item The branch of $\det{(c \tau + d)}^{\frac{1}{2}}$ is the one whose sign is positive whenever $Re\tau = 0$;
\end{enumerate} Since $\phi |_{z=0} = 1$, Theta constants transform as follows:
\begin{equation*}
\theta_{\gamma m} (\gamma \tau) = \kappa(\gamma) \, \chi_m(\gamma) \, {\det{(c \tau + d)}}^{\frac{1}{2}} \theta_m (\tau)
\end{equation*}
\ni the product $\theta_m \theta_n$ of two Theta constants being thus found to be a modular form of weight $1$ with respect to $\Gamma_g(4,8)$.\\

\ni \emph{Second order Theta constants} can be also defined by setting for each $m' \in \mathds{Z}_2^g$:
\begin{equation*}
\Theta_{m'}(\tau) \coloneqq \theta_{\left [\substack{m' \\ 0} \right ]}(2\tau, 0)=\theta_{\left [\substack{m' \\ 0} \right ]}(2\tau)
\end{equation*} Whenever $\gamma \in \Gamma_{g,0}(2)$, a remarkable transformation law holds for second order Theta constants:
\begin{equation*}
\Theta_{m'}(\gamma \tau) = \theta_{\left [\substack{m' \\ 0} \right ]}(\tilde{\gamma} 2\tau) = \kappa(\tilde{\gamma}) \chi_{\left [\substack{m' \\ 0} \right ]}(\tilde{\gamma}) \det{(c\tau + d)}^{\frac{1}{2}}\Theta_{m'}(\tau)
\end{equation*}
\ni where $\tilde{\gamma} \coloneqq \begin{pmatrix} a & 2b \\ \frac{1}{2}c & d \end{pmatrix} \in \Gamma_g$.\\

\ni By virtue of this formula the product $\Theta_{m'}\Theta_{n'}$ of two second order Theta constants is likewise found to be a modular form of weight $1$ with respect to $\Gamma_g(2,4)$. \\

\ni The product of all the non-vanishing Theta constants is a modular form of weight $2^{g-2}(2^g+1)$ with respect to $\Gamma_g$ itself whenever $g\geq 3$. The case $g=2$ is instead a special one, the character appearing in the corresponding transformation formula being not trivial; in this case a modular form of weight $5$ only with respect to $\Gamma_2(2)$ is actually gained:
\begin{equation}
\label{chi5}
\chi_5 \coloneqq \prod_{m \in C_{e}^{(2)}} \theta_m = \mu \cdot \det \begin{pmatrix} \, \Theta_{[ 0 \, 0]} &  \Theta_{[ 0 \, 1]} &  \Theta_{[ 1 \, 0]} &  \Theta_{[ 1 \, 1]} \, \\ \\ \, \frac{\partial \Theta_{[ 0 \, 0]}}{\partial\tau_{11}} &  \frac{\partial \Theta_{[ 0 \, 1]}}{\partial\tau_{11}} &  \frac{\partial \Theta_{[ 1 \, 0]}}{\partial\tau_{11}} & \frac{\partial \Theta_{[ 1 \, 1]}}{\partial\tau_{11}} \, \\ \\  \, \frac{\partial \Theta_{[ 0 \, 0]}}{\partial\tau_{12}} &  \frac{\partial \Theta_{[ 0 \, 1]}}{\partial\tau_{12}} &  \frac{\partial \Theta_{[ 1 \, 0]}}{\partial\tau_{12}} & \frac{\partial \Theta_{[ 1 \, 1]}}{\partial\tau_{12}} \, \\ \\   \, \frac{\partial \Theta_{[ 0 \, 0]}}{\partial\tau_{22}} &  \frac{\partial \Theta_{[ 0 \, 1]}}{\partial\tau_{22}} &  \frac{\partial \Theta_{[ 1 \, 0]}}{\partial\tau_{22}} & \frac{\partial \Theta_{[ 1 \, 1]}}{\partial\tau_{22}} \,   \end{pmatrix}
\end{equation} where $\mu \in \mathds{C}^*$ is a suitable non-zero constant. The square:
\begin{equation}
\label{chi10}
\chi_{10} \coloneqq \chi^2_5 = \prod_{m \in C_{e}^{(2)}} \theta_m^2
\end{equation} is, though, a modular form of weight $10$ with respect to $\Gamma_2$. As concerns the product of all the second order Theta constants:
\begin{equation}
\label{fg}
P_g \coloneqq \prod_{m' \in \mathds{Z_2}^g} \Theta_{m'}
\end{equation}
this one is seen to be a modular form of weight $2^{g-1}$ with respect to $\Gamma_g(2)$ whenever $g \geq 2$.\\
\\

The ring $A(\Gamma_1)$ of modular forms with respect to $\Gamma_1$ is classically known to be generated as a $\mathds{C}$-algebra by the Eisenstein series $E^{(1)}_4$ and $E^{(1)}_6$ respectively of weight $4$ and $6$. Regarding the $g=2$ case, Igusa's structure theorem provides a set of generators (cf. \cite{Igusa7} and \cite{IgusaP}):\\
\begin{teo}
\label{structureigusa}
$A(\Gamma_2) = \mathds{C}[E^{(2)}_4, E^{(2)}_6, \chi_{10}, \chi_{12}, \chi_{35}]$\\
where $E^{(2)}_4$ and $E^{(2)}_6$ are the Eisenstein series respectively of weight $4$ and $6$, $\chi_{10}$ is the modular form described in (\ref{chi10}), $\chi_{12}$ is a modular form of weight $12$ obtained by a suitable symmetrization:
\begin{equation*}
\chi_{12}= \frac{1}{2^{17}3}\sum_{\substack{\{m_{i_1}, \dots , m_{i_6}\} \, \text{s. t.} \\ C_e^{(2)} - \{m_{i_1}, \dots , m_{i_6}\} \in C_4^+}} \pm (\theta_{m_{i_1}} \cdots \theta_{m_{i_6}})^{4}
\end{equation*}
\ni and $\chi_{35} \coloneqq \chi_5 \cdot (azy)_5$, where $\chi_5$ is as in (\ref{chi5}) and $(azy)_5$ is defined by:
\begin{equation*}
(azy)_5 \coloneqq \frac{1}{8} \sum_{\{m_i, m_j, m_k\} \in C_3^-} \pm (\theta_{m_i} \theta_{m_j} \theta_{m_k})^{20}
\end{equation*} where the signs are to be properly chosen in order to to gain the correct symmetrization.
\end{teo}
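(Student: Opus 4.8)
\noindent The plan is to split $A(\Gamma_2)$ into its even- and odd-weight parts and treat them in turn. Writing $R = \bigoplus_{k} A(\Gamma_2)_{2k}$ for the subring of even weight, I would first establish that $R = \mathds{C}[E^{(2)}_4, E^{(2)}_6, \chi_{10}, \chi_{12}]$ is a free polynomial ring on these four forms, and then that the whole ring is the rank-two free $R$-module $R \oplus \chi_{35}\, R$. Since $\chi_{35}^2$ has even weight $70$ and hence lies in $R$, the latter statement is exactly $A(\Gamma_2) = R[\chi_{35}] = \mathds{C}[E^{(2)}_4, E^{(2)}_6, \chi_{10}, \chi_{12}, \chi_{35}]$. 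The numerical object organising everything is the Hilbert series
\begin{equation*}
\sum_{k} \dim_{\mathds{C}} A(\Gamma_2)_k \, t^k = \frac{1 + t^{35}}{(1 - t^4)(1 - t^6)(1 - t^{10})(1 - t^{12})},
\end{equation*}
whose denominator records the four even generators and whose numerator records the extra odd generator together with the single relation $\chi_{35}^2 \in R$.

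\noindent The first concrete step is the algebraic independence of $E^{(2)}_4, E^{(2)}_6, \chi_{10}, \chi_{12}$. For this I would use the Siegel operator $\Phi$, which sends $A(\Gamma_2)_k$ to $A(\Gamma_1)_k$ by restriction to the boundary: it carries $E^{(2)}_4, E^{(2)}_6$ to the genus-one Eisenstein series $E^{(1)}_4, E^{(1)}_6$ while annihilating the cusp forms $\chi_{10}$ and $\chi_{12}$. Feeding this into the Fourier--Jacobi expansion, or equivalently restricting to the diagonal $\mathfrak{S}_1 \times \mathfrak{S}_1 \hookrightarrow \mathfrak{S}_2$, one strips any hypothetical polynomial relation of its leading behaviour and descends to genus one, where independence of $E^{(1)}_4, E^{(1)}_6$ is classical; iterating rules out every relation. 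The resulting transcendence degree $4$ is consistent with $\dim(\mathfrak{S}_2/\Gamma_2) = 3$ together with the grading.

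\noindent The crux, and the step I expect to be the main obstacle, is the exact value of $\dim_{\mathds{C}} A(\Gamma_2)_k$ in every weight, that is, the even part of the Hilbert series above. Here I would follow Igusa's geometric route. Through the period map a generic principally polarised abelian surface is the Jacobian of a genus-two curve $y^2 = f(x)$ with $f$ a binary sextic, the excluded locus of decomposable surfaces being precisely the vanishing divisor of $\chi_{10}$. This identifies $R$, up to a reweighting, with the classically known ring of $SL(2)$-invariants of binary sextics, whose Clebsch--Bolza structure --- four algebraically independent invariants together with one skew invariant of odd degree whose square is a polynomial in the others --- is already the shape predicted by the theorem. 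Converting that correspondence into the numerical dimension formula means pinning down the weight-to-degree dictionary and the behaviour along the boundary of the Satake compactification, which I would control with the exact sequences furnished by $\Phi$ and a Riemann--Roch / proportionality computation on the compactified quotient. Combined with the algebraic independence above and a direct verification that monomials in the four forms span each even-weight space, this yields $R = \mathds{C}[E^{(2)}_4, E^{(2)}_6, \chi_{10}, \chi_{12}]$.

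\noindent It remains to identify the odd part. The form $\chi_{35}$ is a nonzero cusp form of the lowest odd weight that supports one, and $\chi_{35}^2 \in R$ is an explicit polynomial in the four even generators. Because $R$ is an integral domain and $\chi_{35} \neq 0$, multiplication by $\chi_{35}$ is injective, giving for each even $k$ an embedding $R_k \hookrightarrow A(\Gamma_2)_{k+35}$ into the (odd) weight $k+35$ space; the factor $1 + t^{35}$ in the Hilbert series then forces $\dim A(\Gamma_2)_{k+35} = \dim R_k$, so the embedding is onto. Hence $A(\Gamma_2) = R \oplus \chi_{35}\, R = \mathds{C}[E^{(2)}_4, E^{(2)}_6, \chi_{10}, \chi_{12}, \chi_{35}]$, as claimed. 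The one remaining delicate point is to confirm that $\chi_{35}$ is not identically zero and genuinely carries odd weight $35$, and it is precisely here that the explicit theta-product expression $\chi_{35} = \chi_5 \cdot (azy)_5$ becomes indispensable.
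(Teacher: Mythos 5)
The first thing to say is that the paper does not prove this statement at all: Theorem \ref{structureigusa} is Igusa's classical structure theorem, imported by citation from \cite{Igusa7} and \cite{IgusaP}, and the paper's contribution begins only afterwards. So your attempt can only be measured against Igusa's original argument. Its overall shape you do reproduce correctly: the even-weight part is a free polynomial ring on $E^{(2)}_4, E^{(2)}_6, \chi_{10}, \chi_{12}$, and the full ring is the rank-two free module $R \oplus \chi_{35}R$ over it, with the single relation expressing $\chi_{35}^2$ as a polynomial in the even generators.

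There is, however, a genuine circularity at the step you treat as routine. You posit the Hilbert series with numerator $1+t^{35}$ as "the numerical object organising everything," and then deduce the odd part of the theorem from that numerator: the factor $1+t^{35}$ forces $\dim A(\Gamma_2)_{k+35} = \dim R_k$, hence surjectivity of multiplication by $\chi_{35}$. But the odd part of that Hilbert series \emph{is} the statement $A(\Gamma_2)_{\mathrm{odd}} = \chi_{35}R$; you give no independent computation of the odd-weight dimensions (the even-weight route through binary sextics says nothing about them), so the argument assumes what it must prove. Igusa's actual proof of the odd part is structural, not numerical: working with the level-$2$ ring and the $S_6 \cong \Gamma_2/\Gamma_2(2)$ action, he shows every odd-weight form vanishes on the appropriate loci and is therefore divisible by $\chi_{35}$, which is what replaces your dimension count. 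A second, smaller gap sits in what you call the crux: the identification of $R$ with the ring of invariants of binary sextics is not an isomorphism of graded rings "up to a reweighting" --- the degree-$2$ Igusa--Clebsch invariant corresponds to the \emph{meromorphic} quotient $\chi_{12}/\chi_{10}$, so the correspondence is only birational, and controlling it along the divisor $\chi_{10}=0$ and the Satake boundary is precisely where the bulk of Igusa's work lies; deferring it to "a Riemann--Roch / proportionality computation" leaves the even-weight dimension formula unproved as well.
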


\ni Due to the isomorphism $\psi_P$ described in (\ref{isomorphismtoS6}), the group $\Gamma_2/\Gamma_2(2)$ admits a sole non trivial irreducible representation of degree $1$, the corresponding character being as follows: 
\begin{equation}
\label{chiP}
\chi_P (\gamma) \coloneqq \left \{ \begin{array}{l} 1 \quad \text{if} \quad \psi_P([\gamma]) \quad \text{is an even permutation} \\ \\ -1 \quad \text{if} \quad \psi_P([\gamma]) \quad \text{is an odd permutation} \end{array} \right.
\end{equation} \\

\ni Then, by setting $\Gamma_2^+ = Ker\chi_P$, the following structure theorem holds (cf. \cite{Igusa7}):

\begin{teo}
\label{structure30}
$A(\Gamma_2^+) = \mathds{C}[E^{(2)}_4, E^{(2)}_6, \chi_{5}, \chi_{12}, (azy)_5]$.
\end{teo}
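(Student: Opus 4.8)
The plan is to exploit that $\Gamma_2^+ = Ker\,\chi_P$ is normal of index $2$ in $\Gamma_2$, so that $\Gamma_2/\Gamma_2^+ \cong \mathds{Z}/2\mathds{Z}$ acts on $A(\Gamma_2^+)$ and decomposes it into its two isotypical components,
\begin{equation*}
A(\Gamma_2^+) = A(\Gamma_2) \oplus A(\Gamma_2, \chi_P),
\end{equation*}
where $A(\Gamma_2, \chi_P)$ is the space of modular forms for $\Gamma_2$ carrying the character $\chi_P$ of (\ref{chiP}); the first summand is governed by Theorem \ref{structureigusa}. First I would locate the two new generators inside the second summand. The form $\chi_5$ is modular of weight $5$ for $\Gamma_2(2)$, so its transformation character $\varepsilon$ under the full $\Gamma_2$ is trivial on $\Gamma_2(2)$ and hence factors through $\Gamma_2/\Gamma_2(2) \cong S_6$; moreover $\varepsilon^2$ is trivial because $\chi_5^2 = \chi_{10} \in A(\Gamma_2)$. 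Since $S_6$ has the sign as its only nontrivial order-two character, and since $A(\Gamma_2)$ contains no nonzero form of the odd weight $5$, the character $\varepsilon$ cannot be trivial and must equal $\chi_P$; thus $\chi_5 \in A(\Gamma_2,\chi_P)$. Consequently $(azy)_5 = \chi_{35}/\chi_5$ is holomorphic of weight $30$ with character $\chi_P^{-1} = \chi_P$, so it too lies in $A(\Gamma_2,\chi_P)$.

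Set $R := \mathds{C}[E^{(2)}_4, E^{(2)}_6, \chi_5, \chi_{12}, (azy)_5]$. The inclusion $R \subseteq A(\Gamma_2^+)$ is immediate since each generator is $\Gamma_2^+$-invariant. For the reverse inclusion I would treat the two summands separately. The relations $\chi_{10} = \chi_5^2$ and $\chi_{35} = \chi_5\,(azy)_5$ show at once that $R \supseteq A(\Gamma_2)$, and in particular $R$ contains the even-weight subring $\mathds{C}[E^{(2)}_4, E^{(2)}_6, \chi_{10}, \chi_{12}]$. It then remains to prove $A(\Gamma_2,\chi_P) \subseteq R$, that is, that $A(\Gamma_2,\chi_P)$ is generated over $A(\Gamma_2)$ by $\chi_5$ and $(azy)_5$. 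Given $f \in A(\Gamma_2,\chi_P)$, the product $\chi_5 f$ has trivial character and so lies in $A(\Gamma_2)$; invoking the defining relation underlying Theorem \ref{structureigusa}, which presents $A(\Gamma_2)$ as a free rank-two module over $\mathds{C}[E^{(2)}_4, E^{(2)}_6, \chi_{10}, \chi_{12}]$ with basis $\{1, \chi_{35}\}$, I would write $\chi_5 f = p + q\,\chi_{35}$ with $p, q \in \mathds{C}[E^{(2)}_4, E^{(2)}_6, \chi_{10}, \chi_{12}]$. Dividing by $\chi_5$ gives $f = p/\chi_5 + q\,(azy)_5$, whence $p/\chi_5 = f - q\,(azy)_5$ is again holomorphic: $\chi_5$ divides $p$.

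The crux is therefore the divisibility lemma: if $p \in \mathds{C}[E^{(2)}_4, E^{(2)}_6, \chi_{10}, \chi_{12}]$ is divisible by $\chi_5$, then it is already divisible by $\chi_{10} = \chi_5^2$, with quotient again in $\mathds{C}[E^{(2)}_4, E^{(2)}_6, \chi_{10}, \chi_{12}]$. Granting this, $p/\chi_5 = (p/\chi_{10})\,\chi_5$, so $f = (p/\chi_{10})\,\chi_5 + q\,(azy)_5 \in R$, completing the argument. I expect this lemma to be the main obstacle, and I would settle it geometrically near the reducible locus. The involution $\sigma = diag(1,-1,1,-1) \in \Gamma_2(2)$ fixes the diagonal $D_0 = \{\tau_{12} = 0\}$ and acts by $\tau_{12} \mapsto -\tau_{12}$; for any form of weight $w$ its automorphy factor along $\sigma$ is $\det(c\tau+d)^w = \det(diag(1,-1))^w = (-1)^w$, so the form is even in $\tau_{12}$ when $w$ is even and odd when $w$ is odd. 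Classically the even Theta constants vanish only along the $\Gamma_2$-orbit of $D_0$ (the reducible locus, where $\chi_{10}$ vanishes), and the unique even Theta constant vanishing on $D_0$ does so to first order — as one checks by the heat equation, its $\tau_{12}$-derivative at $D_0$ being a nonzero product of derivatives of odd genus-one Theta constants. Hence $\chi_5$ vanishes to order exactly $1$ along this locus and nowhere else, in accordance with its being odd in $\tau_{12}$.

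Any $p$ as above has even weight, hence is even in $\tau_{12}$; if it vanishes on $D_0$ it must vanish to order at least $2$, so $\chi_{10} = \chi_5^2$ divides it near $D_0$. Finally, the $\Gamma_2$-invariance of $p$ and $\chi_{10}$, together with the absence of zeros of $\chi_{10}$ away from the reducible locus, promotes this to a global divisibility on $\mathfrak{S}_2$, the quotient $p/\chi_{10}$ being a holomorphic modular form of even weight and therefore lying, by the same free-module description, in $\mathds{C}[E^{(2)}_4, E^{(2)}_6, \chi_{10}, \chi_{12}]$. This closes the chain of inclusions and yields $A(\Gamma_2^+) = R$.
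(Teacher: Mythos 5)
You should first note that the paper does not prove Theorem \ref{structure30} at all: it is quoted from Igusa (cf. \cite{Igusa7}). Your proposal is therefore, by necessity, a genuinely different route --- an actual derivation of Theorem \ref{structure30} from Theorem \ref{structureigusa} --- and it is essentially correct. The isotypical decomposition $A(\Gamma_2^+)=A(\Gamma_2)\oplus A(\Gamma_2,\chi_P)$ is valid because $\Gamma_2^+$ is normal of index two; your identification of the $\Gamma_2$-character of $\chi_5$ (and hence of $(azy)_5$) with $\chi_P$ is correct, since the character factors through $\Gamma_2/\Gamma_2(2)\cong S_6$, whose only nontrivial one-dimensional character is the sign, and since no monomial in the generators of weights $4,6,10,12,35$ can have weight $5$; and the divisibility lemma, which is the real content, is sound: $\chi_5$ vanishes precisely on the $\Gamma_2$-orbit of $D_0$ and to first order there (the other nine Theta constants restrict to products of nonvanishing genus-one even Theta constants, and the $\tau_{12}$-derivative of the tenth is nonzero by Jacobi's derivative formula), while the involution $\sigma=diag(1,-1,1,-1)\in\Gamma_2(2)$ forces even-weight forms to be even, and $\chi_5$ to be odd, in $\tau_{12}$. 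What your route buys is substantial: it turns a black-box citation into a proof resting only on Theorem \ref{structureigusa} plus classical facts about the theta-null divisor, and as a byproduct it also proves the uniqueness assertion of Note \ref{note1}, which the paper likewise states without proof.

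Two refinements are worth making. First, you invoke $A(\Gamma_2)$ being a free rank-two module over $\mathds{C}[E^{(2)}_4,E^{(2)}_6,\chi_{10},\chi_{12}]$ with basis $\{1,\chi_{35}\}$; this is true (it is Igusa's theorem that the even-weight subring is exactly $\mathds{C}[E^{(2)}_4,E^{(2)}_6,\chi_{10},\chi_{12}]$, equivalently that $\chi_{35}^2$ is a polynomial in the even generators), but it does not follow formally from the bare equality in Theorem \ref{structureigusa}, so either cite it separately or --- better --- avoid it: for homogeneous $f\in A(\Gamma_2,\chi_P)$ of even weight, $\chi_5 f$ has odd weight, and every odd-weight polynomial in the five generators is $\chi_{35}$ times an element of $A(\Gamma_2)$, giving $f=(azy)_5\,q$ with $q\in A(\Gamma_2)\subseteq R$ directly; for $f$ of odd weight, your lemma applied to $p=\chi_5 f$ (which needs only that $p$ is a $\Gamma_2$-modular form of even weight, not that it lies in the even polynomial ring) yields $p/\chi_{10}\in A(\Gamma_2)\subseteq R$ and $f=\chi_5\,(p/\chi_{10})$; nowhere is the basis $\{1,\chi_{35}\}$ actually needed, only the inclusion $A(\Gamma_2)\subseteq R$. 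Second, the globalization at the end should be phrased in terms of divisors: distinct translates $\gamma D_0$ can intersect, so the clean statement is that the divisor of $p$ dominates the divisor of $\chi_{10}$ component by component (each translate occurring with multiplicity exactly $2$ in $div(\chi_{10})$ and at least $2$ in $div(p)$, by transporting the parity argument with the automorphy factor), whence $p/\chi_{10}$ is holomorphic because $\mathfrak{S}_2$ is smooth. Both points are cosmetic; the proof stands.
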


\begin{note}
\label{note1}
\emph{Amid the generators of $A(\Gamma_2^+)$ described in Theorem \ref{structure30}, $\chi_5$ and $(azy)_5$ are the only ones that transform with the non-trivial character $\chi_P$ under the action of the full modular group $\Gamma_2$. The function $(azy)_5$ is, in particular, the unique modular form of weight $30$ with respect to $\Gamma_2^+$ admitting a non trivial character under the action of $\Gamma_2$.}
\end{note}
\ni The next section is devoted to provide a different expression for $(azy)_5$ by means of a remarkable geometrical construction already described in \cite{Structure}.

\medskip

\medskip

\section{Geometric description of the modular form $(azy)_5$} 

Second order Theta constants are related to Theta constants by Riemann's addition formula. When $g=2$, the following relations hold in particular between the ten non-trivial Theta constants and the four second order Theta constants: 

\begin{equation*}
\begin{array}{lll}
\theta^2_{\left [\substack{0 \, 0 \\ 0 \, 0} \right ]} = \Theta_{[ 0 \, 0]}^2 + \Theta_{[ 0 \, 1]}^2 +\Theta_{[ 1 \, 0]}^2 +\Theta_{[ 1 \, 1]}^2 &\quad \quad & {\theta}^2_{\left [\substack{0 \, 0 \\ 0 \, 1} \right ]} = \Theta_{[ 0 \, 0]}^2 - \Theta_{[ 0 \, 1]}^2 +\Theta_{[ 1 \, 0]}^2 -\Theta_{[ 1 \, 1]}^2\\
\\
{\theta}^2_{\left [\substack{0 \, 0 \\ 1 \, 0} \right ]} = \Theta_{[ 0 \, 0]}^2 +\Theta_{[ 0 \, 1]}^2 - \Theta_{[ 1 \, 0]}^2 - \Theta_{[ 1 \, 1]}^2 &\quad \quad & {\theta}^2_{\left [\substack{0 \, 0 \\ 1 \, 1} \right ]} = \Theta_{[ 0 \, 0]}^2 -\Theta_{[ 0 \, 1]}^2 - \Theta_{[ 1 \, 0]}^2 +\Theta_{[ 1 \, 1]}^2\\
\\
{\theta}^2_{\left [\substack{0 \, 1 \\ 0 \, 0} \right ]} = 2 \Theta_{[ 0 \, 0]} \Theta_{[ 0 \, 1]}  + 2 \Theta_{[ 1 \, 0]}\Theta_{[ 1 \, 1]} &\quad \quad & {\theta}^2_{\left [\substack{1 \, 0 \\ 0 \, 0} \right ]} = 2\Theta_{[0 \, 0]} \Theta_{[ 1 \, 0]} + 2\Theta_{[ 0 \, 1]}\Theta_{[ 1 \, 1]}\\
\\
{\theta}^2_{\left [\substack{1 \, 1 \\ 0 \, 0} \right ]} = 2 \Theta_{[ 0 \, 0]} \Theta_{[ 1 \, 1]}  + 2 \Theta_{[ 0 \, 1]} \Theta_{[ 1 \, 0]} &\quad \quad & {\theta}^2_{\left [\substack{0 \, 1 \\ 1 \, 0} \right ]} = 2\Theta_{[ 0 \, 0]}\Theta_{[ 0 \, 1]} - 2\Theta_{[ 1 \, 0]}\Theta_{[ 1 \, 1]}\\
\\
{\theta}^2_{\left [\substack{1 \, 0 \\ 0 \, 1} \right ]} = 2 \Theta_{[ 0 \, 0]} \Theta_{[ 1 \, 0]}  - 2 \Theta_{[ 0 \, 1]} \Theta_{[ 1 \, 1]} &\quad \quad & {\theta}^2_{\left [\substack{1 \, 1 \\ 1 \, 1} \right ]} = 2\Theta_{[ 0 \, 0]} \Theta_{[ 1 \, 1]} - 2\Theta_{[ 0 \, 1]} \Theta_{[ 1 \, 0]}
\end{array}
\end{equation*}\\

\ni A quadratic form $Q_m$ in the variables $X_1, X_2, X_3, X_4$ is, therefore, associated to each even $2$-characteristic $m$:
\begin{equation*}
m \longmapsto \, Q_m \quad \quad \quad \quad \text{where} \quad \theta_m^2=Q_m(\Theta_1, \Theta_2, \Theta_3, \Theta_4)
\end{equation*}
\ni Hence, a quadric $V_m$ in the projective space $\mathds{P}^3$ also corresponds to each even $2$-characteristic $m$:
\begin{equation*}
m \longmapsto \, V_m \coloneqq V(Q_m) = \{\, [X_1, X_2, X_3, X_4] \in  \mathds{P}^3 \, \mid \, \, \, Q_m(X_1, X_2, X_3, X_4) = 0 \, \}
\end{equation*} 

\medskip

\ni Furthermore, for each $4$-plet $M \in C_4^+$ the set
\begin{equation*}
\bigcap_{m \in M^c} V_m \subset \mathds{P}^3
\end{equation*} 
where $M^c$ stands for the $6$-plet of even $2$-characteristics being complementary in $C_e^{(2)}$ to $M$, contains exactly four points (cf. \cite{Structure}). A configuration of four hyperplanes in $\mathds{P}^3$ is thus uniquely determined for any $M \in C_4^+$, each hyperplane being characterized by passing through all except one of the four points contained in the set $\bigcap_{m \in M^c} V_m$. Therefore, a collection of four linear forms describing these four hyperplanes is found to be associated to each $M \in C_4^+$:\\

\begin{equation*}
\left \{ \begin{array}{l} \psi^M_1 \coloneqq \psi^M_1 (X_1, X_2, X_3, X_4) \\
\\
\psi^M_2 \coloneqq \psi^M_2 (X_1, X_2, X_3, X_4)\\ 
\\
\psi^M_3 \coloneqq \psi^M_3 (X_1, X_2, X_3, X_4)\\ 
\\
\psi^M_1 \coloneqq \psi^M_1 (X_1, X_2, X_3, X_4) 
\end{array} \right. 
\end{equation*}\\

\ni Hence, a tetrahedron $T_M$ in the projective space $\mathds{P}^3$ is uniquely determined by each $4$-plet $M=(m_1, m_2, m_3, m_4) \in C_4^+$, the set $\bigcap_{m \in M^c} V_m$ being in fact the set of its vertices.\\

\ni A remarkable holomorphic function can be associated to each $M \in C_4^+$ by means of the  functions $\psi^M_i$:\\

\begin{equation*}
\begin{array}{l}
F_M(\tau) \coloneqq \tilde{F}_M (\, \Theta_{[ 0 \, 0]}(\tau), \Theta_{[ 0 \, 1]}(\tau), \Theta_{[ 1 \, 0]}(\tau),\Theta_{[ 1 \, 1]}(\tau)\,) \\
\\
\text{where} \quad \tilde{F}_M \coloneqq \prod_{i=1}^4 \psi^M_i
\end{array}
\end{equation*}\\

\ni In particular, the $4$-plet
\begin{equation*}
M_0 \coloneqq \left \{ \begin{bmatrix} 0 0 \\ 0 0 \end{bmatrix},  \begin{bmatrix} 0 0 \\ 0 1 \end{bmatrix}, \begin{bmatrix} 0 0 \\ 1 0 \end{bmatrix}, \begin{bmatrix} 0 0 \\ 1 1 \end{bmatrix} \right \}  \in C_4^+
\end{equation*} is such that: 
\begin{equation*}
\bigcap_{m \in M_0^c} V_m = \{[1,0,0,0], [0,1,0,0], [0,0,1,0], [0,0,0,1] \}
\end{equation*}
\ni the faces of the corresponding tetrahedron $T_{M_0}$ being thus described by:
\begin{equation*}
\begin{array}{cccc}
\psi^{M_0}_1 = X_1; & \psi^{M_0}_2 = X_2; & \psi^{M_0}_1 = X_3; & \psi^{M_0}_4 = X_4; 
\end{array}
\end{equation*} \\

\ni Hence:
\begin{equation*}
F_{M_0}(\tau) = \prod_{m' \in \mathds{Z_2}^2} \Theta_{m'}=P_2(\tau)
\end{equation*}
and $F_{M_0}$ is, therefore, a modular form of weight $2$ with respect to $\Gamma_2(2)$ (cf. (\ref{fg})\,).\\

\medskip

\begin{lem}
\label{st}
The group $\Gamma_{2,0}(2)$ is the stabilizer $St_{M_0}$ of the $4$-plet $M_0$. 
\end{lem}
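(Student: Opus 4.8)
The plan is to read off the answer directly from the explicit formula (\ref{action}) for the action of $\Gamma_2$ on characteristics. First I would identify $M_0$ intrinsically: the four elements $\left[\substack{0 \\ m''}\right]$ are exactly the even $2$-characteristics whose primed component vanishes, since $e\!\left(\left[\substack{0 \\ m''}\right]\right)=(-1)^{{}^t 0\, m''}=1$ for every $m''\in\mathds{Z}_2^2$, and there are precisely four of them. Because the $\Gamma_2$-action permutes $C_e^{(2)}$ bijectively and preserves the parity $e$, and because $M_0$ exhausts the even characteristics with $m'=0$, the set-theoretic condition $\gamma\cdot M_0 = M_0$ is equivalent to the inclusion $\gamma\cdot M_0\subseteq M_0$, hence to the requirement that the primed component of $\gamma\cdot\left[\substack{0 \\ m''}\right]$ vanish modulo $2$ for every $m''\in\mathds{Z}_2^2$.

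Next I would specialise (\ref{action}) to characteristics of the form $\left[\substack{0 \\ m''}\right]$. A direct substitution gives
\begin{equation*}
\gamma \begin{bmatrix} 0 \\ m'' \end{bmatrix} = \begin{bmatrix} -c\,m'' + diag(c\,{}^t d) \\ a\,m'' + diag(a\,{}^t b) \end{bmatrix} \bmod 2 ,
\end{equation*}
so that, by the reduction above, $\gamma$ stabilises $M_0$ if and only if
\begin{equation*}
-c\,m'' + diag(c\,{}^t d) \equiv 0 \bmod 2 \qquad \forall\, m'' \in \mathds{Z}_2^2 .
\end{equation*}

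Finally I would solve this congruence. Taking $m''=0$ yields $diag(c\,{}^t d)\equiv 0 \bmod 2$, and then testing the two standard basis vectors $m''=e_i$ forces every column $c\,e_i$ of $c$ to be even, i.e.\ $c\equiv 0\bmod 2$. Conversely, if $c\equiv 0\bmod 2$ then $c\,m''\equiv 0$ for all $m''$ and the correction term $diag(c\,{}^t d)$ is automatically even, so the congruence holds identically. Hence $St_{M_0}=\{\gamma\in\Gamma_2\mid c\equiv 0\bmod 2\}=\Gamma_{2,0}(2)$, as claimed.

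The argument is in essence a bookkeeping computation, so I do not anticipate a genuine obstacle; the only point demanding care is the reduction of the set-stabiliser condition to the pointwise vanishing of the primed components, which relies on recognising $M_0$ as the \emph{entire} class of even characteristics with $m'=0$ together with the parity-preserving bijectivity of the $\Gamma_2$-action. One should also keep in mind that the diagonal term $diag(c\,{}^t d)$ imposes no constraint beyond $c\equiv 0\bmod 2$, since it is subsumed once $c$ is even.
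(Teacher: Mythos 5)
Your proof is correct and follows essentially the same route as the paper: both reduce the stabilizer condition, via the explicit action formula (\ref{action}) applied to the characteristics $\left[\substack{0 \\ m''}\right]$, to the congruence $diag(c\,{}^t d) - c\,m'' \equiv 0 \bmod 2$ for all $m'' \in \mathds{Z}_2^2$, which forces $c \equiv 0 \bmod 2$. Your write-up merely makes explicit the details the paper leaves implicit (the identification of $M_0$ with the even characteristics having $m'=0$, the reduction of the set-stabilizer condition to a pointwise one, and the solution of the congruence by testing $m''=0$ and the basis vectors).
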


\begin{proof}
On the one hand $\Gamma_{2,0}(2) \subset St_{M_0}$ by definition; on the other hand, $\gamma \in St_{M_0}$ implies $diag({}^tcd) - cm'' \equiv 0 \, \text{mod}\, 2$ for each $m'' \in \mathds{Z}^2_2$, hence $\gamma \in \Gamma_{2,0}(2)$.
\end{proof}

\ni As $C_4^+$ is an orbit, Lemma \ref{st} self-evidently implies the following:
\begin{coro}
$[\Gamma_2 : \Gamma_{2,0}(2)]=15$
\end{coro}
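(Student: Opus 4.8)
The plan is to invoke the orbit--stabilizer theorem, since every ingredient is already at hand. The group $\Gamma_2$ acts on the set of non-ordered $4$-plets of even $2$-characteristics through the action on characteristics described in (\ref{action}), and $C_4^+$ has been identified as one of the three orbits of this action; in particular, the action of $\Gamma_2$ restricted to $C_4^+$ is transitive. Lemma~\ref{st} furthermore identifies the stabilizer of the distinguished $4$-plet $M_0 \in C_4^+$ precisely as $\Gamma_{2,0}(2)$, so the two structural facts needed are already established.

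First I would recall the bijection supplied by the orbit--stabilizer correspondence: for a transitive action of a group $G$ on a set $X$ and any chosen point $x \in X$, the assignment $g\,St_x \longmapsto g \cdot x$ gives a well-defined bijection between the coset space $G/St_x$ and $X$. Applying this with $G = \Gamma_2$, $X = C_4^+$ and $x = M_0$, the index $[\Gamma_2 : St_{M_0}]$ equals $card(C_4^+)$.

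It then only remains to substitute the two facts already recorded: by Lemma~\ref{st} one has $St_{M_0} = \Gamma_{2,0}(2)$, while $card(C_4^+) = 15$ was noted when the orbit decomposition of the $4$-plets was presented. Combining these yields $[\Gamma_2 : \Gamma_{2,0}(2)] = 15$, as claimed.

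There is no genuine obstacle here: the whole substance of the corollary has been absorbed into Lemma~\ref{st} together with the transitivity of $\Gamma_2$ on $C_4^+$, which is exactly why the implication is flagged as self-evident. Were one to aim at a fully self-contained argument, the sole point meriting a line of care would be checking that the induced action is well defined on \emph{non-ordered} $4$-plets, so that $M_0$ is a single point of $C_4^+$ rather than an ordered tuple; but this is immediate from the definition of the $\Gamma_2$-action on characteristics in (\ref{action}).
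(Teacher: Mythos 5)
Your proof is correct and is exactly the argument the paper leaves implicit: since $C_4^+$ is a single $\Gamma_2$-orbit and Lemma~\ref{st} identifies $St_{M_0}=\Gamma_{2,0}(2)$, the orbit--stabilizer bijection gives $[\Gamma_2:\Gamma_{2,0}(2)]=card(C_4^+)=15$. You have simply spelled out the step the paper calls ``self-evident,'' so there is nothing to add.
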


\ni The map $[\gamma] \mapsto T_{\gamma M_1}$ is thus a bijection between $\Gamma_2 / \Gamma_{2,0}(2)$ and the collection $\{T_M\}_{M \in C_4^+}$ of the tetrahedrons; the product of all the images of $F_{M_0}$ under the action of the representatives of the fifteen cosets of $\Gamma_{2,0}(2)$ in $\Gamma_2$ is then a proper candidate to be focused on. However, the behaviour of $F_{M_0}$ under the action of $\Gamma_{2,0}(2)$ is due to be investigated foremost.
\begin{prop}
\label{f0transform}
Let $\chi_P$ be the character introduced in (\ref{chiP}); then, with reference to the action in (\ref{azf1}), one has:
\begin{equation*}
\eta^{-1} |_2 F_{M_0}= \chi_ P(\eta) \, F_{M_0} \quad \quad \quad \quad \forall \eta \in \Gamma_{2,0}(2)
\end{equation*}

\end{prop}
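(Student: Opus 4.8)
The plan is to compute directly how $F_{M_0}$ transforms under an arbitrary $\eta \in \Gamma_{2,0}(2)$, using the transformation law for second order Theta constants stated in the excerpt, and then identify the resulting scalar with $\chi_P(\eta)$. Since $F_{M_0} = \prod_{m' \in \mathds{Z}_2^2}\Theta_{m'}$ is the product of all four second order Theta constants, the essential ingredient is the formula
\begin{equation*}
\Theta_{m'}(\eta\tau) = \kappa(\tilde{\eta})\,\chi_{[\substack{m'\\0}]}(\tilde{\eta})\,\det(c\tau+d)^{\frac{1}{2}}\,\Theta_{\sigma(m')}(\tau),
\end{equation*}
where $\tilde{\eta} = \begin{pmatrix} a & 2b \\ \frac{1}{2}c & d\end{pmatrix}$ and where, for $\eta \in \Gamma_{2,0}(2)$, the action on second order characteristics permutes the index $m'$ by some $\sigma$ (this is implicit in Lemma~\ref{st}, which says $\eta$ fixes $M_0$ as a set, hence merely permutes the four second order Theta constants). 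Taking the product over all $m' \in \mathds{Z}_2^2$, the permutation $\sigma$ leaves $F_{M_0}$ invariant up to reordering, so one obtains
\begin{equation*}
F_{M_0}(\eta\tau) = \kappa(\tilde{\eta})^4 \Bigl(\textstyle\prod_{m'} \chi_{[\substack{m'\\0}]}(\tilde{\eta})\Bigr)\det(c\tau+d)^{2}\,F_{M_0}(\tau).
\end{equation*}

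\textbf{The key steps, in order.} First I would make explicit the action of $\eta \in \Gamma_{2,0}(2)$ on the four characteristics $[\substack{m'\\0}]$, confirming it is a permutation $\sigma$ of $\mathds{Z}_2^2$; the condition $c \equiv 0 \bmod 2$ guarantees the top block of the characteristic stays in the $m''=0$ family. Second, I would evaluate the scalar prefactor $c(\eta) := \kappa(\tilde{\eta})^4 \prod_{m'} \chi_{[\substack{m'\\0}]}(\tilde{\eta})$ using the explicit formulas: $\kappa(\gamma)^4 = \exp\{\mathrm{Tr}({}^tbc)\}$ and $\chi_m(\gamma) = \exp\{2\xi_m(\gamma)\}$ with the quadratic $\xi_m$ given in the excerpt. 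Because we are summing $\xi_{[\substack{m'\\0}]}$ over all four $m' \in \mathds{Z}_2^2$, the terms involving $m''$ drop out and only the $-\frac{1}{8}\,{}^t m'\,{}^t(2b)\,d\,m'$ piece (with $\tilde\eta$'s entries) and the $diag$ term survive, which I expect to collapse to a clean $\pm 1$. Third, I would recognize the factor $\det(c\tau+d)^2 = D(\eta,\tau)^2$, so that rewriting in terms of the slash action \eqref{azf1} converts $F_{M_0}(\eta\tau) = c(\eta)\det(c\tau+d)^2 F_{M_0}(\tau)$ into precisely $\eta^{-1}|_2 F_{M_0} = c(\eta)\,F_{M_0}$.

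\textbf{The final and decisive step} is to show the constant $c(\eta)$ equals $\chi_P(\eta)$, i.e. it is $+1$ exactly when $\psi_P([\eta])$ is an even permutation. Here I would use Note~\ref{note1} together with the group structure: $c$ is manifestly a character of $\Gamma_{2,0}(2)$ taking values in $\{\pm 1\}$ (it is multiplicative by the cocycle property, and $\kappa^4, \chi_m$ are roots of unity whose product over the orbit must be $\pm 1$ since $F_{M_0}$ is single-valued). Since $\chi_P$ restricted to $\Gamma_{2,0}(2)$ is the unique nontrivial order-two character arising from the sign of $\psi_P$, it suffices to verify $c \neq 1$ on a single generator realizing an odd permutation in $S_6$ and $c = 1$ on one realizing an even permutation — a finite check.

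\textbf{The main obstacle} I anticipate is the bookkeeping in the second step: correctly substituting the entries of $\tilde{\eta}$ (note the $2b$ and $\tfrac12 c$) into $\xi_m$, tracking the sign conventions for $\kappa$ via its defining fourth- and second-power relations, and confirming the $m''$-dependent contributions genuinely cancel upon taking the product over the full family $\mathds{Z}_2^2$. The conceptual content is light, but the character $c(\eta)$ must be pinned down to $\pm 1$ unambiguously; I would lean on the uniqueness assertion in Note~\ref{note1} to avoid computing $c$ on all of $\Gamma_{2,0}(2)$, reducing the verification to a couple of well-chosen sample elements.
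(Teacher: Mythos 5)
Your first three steps are sound and in fact more explicit than anything the paper does: the transformation law for second order Theta constants, combined with the fact (extracted from Lemma \ref{st}) that any $\eta\in\Gamma_{2,0}(2)$ merely permutes the four $\Theta_{m'}$, does yield $F_{M_0}(\eta\tau)=c(\eta)\det(c\tau+d)^{2}F_{M_0}(\tau)$ with $c(\eta)=\pm1$ a character of $\Gamma_{2,0}(2)$ (the symmetry of ${}^tbd$ for symplectic matrices makes the relevant exponent an integer). The gap is in your decisive final step. The claim that $\chi_P|_{\Gamma_{2,0}(2)}$ is \emph{the unique} nontrivial order-two character of $\Gamma_{2,0}(2)$ is false. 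Both $c$ and $\chi_P$ are trivial on $\Gamma_2(2)$ (recall $F_{M_0}=P_2$ is a modular form with trivial character there, cf. (\ref{fg})), so both factor through $\Gamma_{2,0}(2)/\Gamma_2(2)$, which by Lemma \ref{st} is the stabilizer of the point $M_0$ in the transitive $15$-point action of $\Gamma_2/\Gamma_2(2)\cong S_6$ on $C_4^+$: a group of order $48$ (isomorphic to $S_2\times S_4$ or to $S_2\wr S_3$, according to which system of $15$ objects $C_4^+$ corresponds to), whose abelianization is $\mathds{Z}_2\times\mathds{Z}_2$. There are therefore \emph{three} distinct nontrivial $\pm1$-valued characters of $\Gamma_{2,0}(2)$ trivial on $\Gamma_2(2)$, only one of which is the restriction of $\chi_P$. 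Your proposed two-element check cannot separate them: identifying the quotient with $S_2\times S_4$, the character $\mathrm{sgn}\otimes 1$ takes the value $-1$ on a transposition in the $S_2$-factor (an odd permutation of the six letters) and $+1$ on the identity, so it passes both of your tests while differing from the restriction of $\chi_P$, which is $\mathrm{sgn}\otimes\mathrm{sgn}$. To pin $c$ down this way you would have to verify $c=\chi_P$ on a full generating set of $\Gamma_{2,0}(2)$ (or of the order-$48$ quotient). Note \ref{note1} cannot rescue the step: it concerns uniqueness of weight-$30$ modular forms with nontrivial character under $\Gamma_2$, and says nothing about characters of the subgroup $\Gamma_{2,0}(2)$.

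The paper's proof is built precisely to sidestep this ambiguity, and it does so with far less computation. It evaluates $F_{M_0}$ at a single explicit element $\eta_0$ to see that the character $\chi$ is nontrivial, observes that $\mathrm{Ker}\,\chi\supset\Gamma_2(2)$, and then invokes the extension result of \cite{Ibukiyama} (Appendix) to conclude that $\chi$ \emph{extends} to a nontrivial character of the full group $\Gamma_2$; since the only nontrivial character of $\Gamma_2$ is $\chi_P$ (the sole nontrivial character of $\Gamma_2/\Gamma_2(2)\cong S_6$), the extension must be $\chi_P$, whence $\chi=\chi_P$ on $\Gamma_{2,0}(2)$. The uniqueness being exploited lives at the level of $\Gamma_2$, not of $\Gamma_{2,0}(2)$, and the nontrivial input legitimizing the passage from subgroup to full group is the cited extension theorem. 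To repair your argument, either import that step (your explicit $c$ plays the role of the paper's $\chi$, and your computation already furnishes the required nontriviality witness), or else push your closed-form expression for $c(\eta)$ through a generating set of $\Gamma_{2,0}(2)$ — feasible, but a substantially larger verification than the one you describe.
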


\begin{proof}
One only needs to check that $\chi_P$ is the very character involved in the transformation law for $F_{M_0}$ under the action of $\Gamma_{2,0}(2)$ . Since one has:
\begin{equation*}
F_{M_0}(\eta_0 \tau) = - F_{M_0} (\tau) \quad \quad \quad \quad \text{for} \quad \eta_0=\begin{pmatrix} 1 & 0 & 1 & 0 \\ 0 & 1 & 0 & 0 \\ 0 & 0 & 1 & 0 \\ 0 & 0 & 0 & 1 \end{pmatrix} \in \Gamma_{2,0}(2)
\end{equation*}
then $F_{M_0}$ transforms with a non-trivial character $\chi$ under the action of $\Gamma_{2,0}(2)$; the character $\chi$ is of course trivial on $\Gamma_{2,0}(2)^+ \coloneqq Ker\chi \supset \Gamma_2(2)$, thus extending to a non-trivial character of $\Gamma_2$ (cf. \cite{Ibukiyama} Appendix), which is well defined on $\Gamma_2/\Gamma_2(2)$; hence $\chi$ extends to $\chi_P$, this one being the sole non-trivial character of $\Gamma_2/\Gamma_2(2)$.
\end{proof}

\medskip

\ni A holomorphic function is well defined for any fixed $\gamma \in \Gamma_2$:
\begin{equation*}
\varphi_{\gamma}(\tau) \coloneqq \chi_P(\gamma)^{-1}(\gamma^{-1}|_2 F_{M_0}) (\tau)
\end{equation*}
\ni In particular, for a fixed choice $\gamma_1, \cdots \gamma_{15}$ of the representatives of the cosets of $\Gamma_{2,0}(2)$ in $\Gamma_2$ a notable holomorphic function is defined:
\begin{equation*}
\varphi(\tau) \coloneqq \prod_{i=1}^{15}\varphi_{\gamma_i}(\tau)
\end{equation*}\\

\begin{prop}
\label{welldefined}
The function $\varphi$ does not depend on the choice of the coset representatives.
\end{prop}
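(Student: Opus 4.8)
The plan is to reduce everything to the single identity $\varphi_{\eta\gamma}=\varphi_\gamma$ for $\eta\in\Gamma_{2,0}(2)$. Indeed, $\varphi=\prod_{i=1}^{15}\varphi_{\gamma_i}$ is a product with exactly one factor per coset, so it suffices to prove that each $\varphi_\gamma$ depends only on the class of $\gamma$, and then to observe that replacing the representatives one factor at a time leaves the whole product unchanged. Two representatives of one and the same class differ by multiplication by an element $\eta\in\Gamma_{2,0}(2)$ (which is the stabilizer $St_{M_0}$, by Lemma \ref{st}); the computation below will show the invariance under $\gamma\mapsto\eta\gamma$, so that $\varphi_\gamma$ is in fact a function of the class of $\gamma$ in $\Gamma_{2,0}(2)\backslash\Gamma_2$, and the fifteen $\gamma_i$ are to be understood as a complete system of representatives for these classes.

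To establish the identity I would first record that the assignment $\gamma\mapsto\gamma|_2(\cdot)$ of (\ref{azf1}) is a genuine \emph{left} action of $\Gamma_2$ on holomorphic functions on $\mathfrak{S}_2$: this is precisely what the cocycle relation (\ref{cocycle}) delivers, namely $(\gamma\gamma')|_2 f=\gamma|_2(\gamma'|_2 f)$. Applying this to the inverse $(\eta\gamma)^{-1}=\gamma^{-1}\eta^{-1}$ gives
\[
(\eta\gamma)^{-1}|_2 F_{M_0}=\gamma^{-1}|_2\bigl(\eta^{-1}|_2 F_{M_0}\bigr).
\]
Proposition \ref{f0transform} identifies the inner transform as $\eta^{-1}|_2 F_{M_0}=\chi_P(\eta)\,F_{M_0}$, and since the action (\ref{azf1}) is $\mathds{C}$-linear the scalar $\chi_P(\eta)$ factors out, so that $(\eta\gamma)^{-1}|_2 F_{M_0}=\chi_P(\eta)\,(\gamma^{-1}|_2 F_{M_0})$. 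It then remains only to track the character: because $\chi_P$ is a homomorphism into $\{\pm1\}$ one has $\chi_P(\eta\gamma)^{-1}=\chi_P(\gamma)^{-1}\chi_P(\eta)^{-1}$, whence
\[
\varphi_{\eta\gamma}=\chi_P(\eta\gamma)^{-1}\bigl((\eta\gamma)^{-1}|_2 F_{M_0}\bigr)=\chi_P(\gamma)^{-1}\chi_P(\eta)^{-1}\chi_P(\eta)\,(\gamma^{-1}|_2 F_{M_0})=\varphi_\gamma,
\]
the two occurrences of $\chi_P(\eta)$ cancelling. This is the desired invariance and the Proposition follows.

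The algebra above is entirely routine; the point that demands care — and where I expect the only real subtlety — is the bookkeeping of sides. One must check that the action (\ref{azf1}) is a left rather than a right action, since reversing this would swap the order of $\gamma^{-1}$ and $\eta^{-1}$ and thereby apply $\eta^{-1}|_2$ to $\gamma^{-1}|_2 F_{M_0}$ rather than to $F_{M_0}$, for which Proposition \ref{f0transform} gives no simple transformation law. Correspondingly, one must verify that "changing a representative'' corresponds to multiplication of $\gamma$ on the precise side (here the left) along which $F_{M_0}$ transforms by the character $\chi_P$, i.e. that the relevant classes are the right cosets $\Gamma_{2,0}(2)\backslash\Gamma_2$. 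Once these conventions are fixed consistently, the cancellation of the characters is automatic and the independence of $\varphi$ from the chosen representatives is immediate.
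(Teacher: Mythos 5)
Your proof is correct and follows essentially the same route as the paper's: both reduce the claim to the single identity $\varphi_{\eta\gamma}=\varphi_{\gamma}$ for $\eta\in\Gamma_{2,0}(2)$, established by combining the cocycle relation (\ref{cocycle}) with Proposition \ref{f0transform}. The only difference is presentational — you package the cocycle relation as the statement that $|_2$ is a left action and keep the computation at that structural level, whereas the paper expands the automorphy factors $D(\cdot,\cdot)$ explicitly inside the product; the cancellation is the same.
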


\begin{proof}
If $\gamma_1, \dots ,\gamma_{15}$ and $\gamma'_1, \dots , \gamma'_{15}$ are two different choices for the representatives of the cosets of $\Gamma_{2,0}(2)$ in $\Gamma_2$, then there exists a permutation $j$ of the indices such that for each $i=1, \dots , 15$ one has $\gamma'_{j(i)}=\eta_i\gamma_i$ with $\eta_i \in \Gamma_{2,0}(2)$. Hence, by (\ref{cocycle}) and Proposition \ref{f0transform}:
\begin{equation*}
\begin{split}
\prod_{j=1}^{15}\varphi_{\gamma'_j}(\tau) & = \prod_{i=1}^{15}\varphi_{\gamma'_{j(i)}}(\tau) = \prod_{i=1}^{15}\varphi_{\eta_i\gamma_i}(\tau) = \prod_{i=1}^{15}\chi_P(\eta_i\gamma_i)^{-1}({(\eta_i\gamma_i)}^{-1}|_2 F_{M_0}) (\tau)=\\
 & = \prod_{i=1}^{15}\chi_P(\eta_i\gamma_i)^{-1} \, D(\eta_i \gamma_i , \tau)^{-2} \, \chi_P(\eta_i) \, D(\eta_i , \gamma_i \tau)^2 \, F_{M_0}(\gamma_i\tau)=\\
& =  \prod_{i=1}^{15}\chi_P(\gamma_i)^{-1} \, D(\gamma_i , \tau)^{-2} \, F_{M_0}(\gamma_i\tau) =  \prod_{i=1}^{15}\varphi_{\gamma_i}(\tau)
\end{split}
\end{equation*}
\end{proof}

\begin{teo}
\label{modular}
$\varphi$ is a modular form of weight $30$ with respect to $\Gamma_2$ with character $\chi_P$.
\end{teo}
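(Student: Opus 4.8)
The plan is to verify the three defining properties of a modular form of weight $30$ with character $\chi_P$: holomorphy, the correct weight, and the transformation law $\delta^{-1}|_{30}\varphi = \chi_P(\delta)\,\varphi$ for every $\delta \in \Gamma_2$. Holomorphy is immediate, since each factor $\varphi_{\gamma_i} = \chi_P(\gamma_i)^{-1}(\gamma_i^{-1}|_2 F_{M_0})$ is a holomorphic function on $\mathfrak{S}_2$ (being a slash-transform of the holomorphic $F_{M_0}$), and a finite product of holomorphic functions is holomorphic; for $g=2$ no cusp condition is required. The weight is settled by the elementary observation that the slash operator is multiplicative with additive weights, i.e. $\delta|_{k+l}(fh) = (\delta|_k f)(\delta|_l h)$: since $\varphi$ is a product of the fifteen weight-$2$ functions $\varphi_{\gamma_i}$, it transforms under $|_\bullet$ as a weight-$30$ object.

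The heart of the argument is the transformation law. First I would record that $(\cdot)|_2$ is a left action, a direct consequence of the cocycle condition (\ref{cocycle}), so that for any $\delta \in \Gamma_2$,
\[
\delta^{-1}|_2 \varphi_{\gamma_i} = \chi_P(\gamma_i)^{-1}\,\bigl(\delta^{-1}\gamma_i^{-1}\bigr)|_2 F_{M_0} = \chi_P(\gamma_i)^{-1}\,\bigl((\gamma_i\delta)^{-1}|_2 F_{M_0}\bigr).
\]
Rewriting $(\gamma_i\delta)^{-1}|_2 F_{M_0} = \chi_P(\gamma_i\delta)\,\varphi_{\gamma_i\delta}$ and using that $\chi_P$ is a character, the scalars collapse to the clean identity $\delta^{-1}|_2 \varphi_{\gamma_i} = \chi_P(\delta)\,\varphi_{\gamma_i\delta}$.

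Next I would exploit the coset combinatorics. The functions $\varphi_\gamma$ depend only on the coset $\Gamma_{2,0}(2)\gamma$, this being exactly the invariance established inside the proof of Proposition \ref{welldefined} by means of Proposition \ref{f0transform}; moreover right translation by a fixed $\delta$ permutes the fifteen cosets of $\Gamma_{2,0}(2)$ in $\Gamma_2$, so that $\{\gamma_i\delta\}_{i=1}^{15}$ is again a complete system of representatives. Proposition \ref{welldefined} then gives $\prod_{i=1}^{15}\varphi_{\gamma_i\delta} = \varphi$. Combining the three ingredients,
\[
\delta^{-1}|_{30}\varphi = \prod_{i=1}^{15}\bigl(\delta^{-1}|_2\varphi_{\gamma_i}\bigr) = \chi_P(\delta)^{15}\prod_{i=1}^{15}\varphi_{\gamma_i\delta} = \chi_P(\delta)^{15}\,\varphi,
\]
and since $\chi_P(\delta)\in\{\pm 1\}$ and $15$ is odd, $\chi_P(\delta)^{15}=\chi_P(\delta)$, whence $\delta^{-1}|_{30}\varphi = \chi_P(\delta)\,\varphi$, as required.

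I expect the only genuinely delicate point to be the bookkeeping of left versus right cosets: one must check that the reindexing produced by the action sends the representative $\gamma_i$ to $\gamma_i\delta$ (a right translation) and that this is precisely the translation under which Proposition \ref{welldefined} guarantees invariance of the product $\varphi$. Everything else — multiplicativity of the slash operator, the character property of $\chi_P$, and the parity reduction $\chi_P(\delta)^{15}=\chi_P(\delta)$ — is routine.
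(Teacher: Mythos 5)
Your proposal is correct and follows essentially the same route as the paper: the identity $\delta^{-1}|_2\varphi_{\gamma_i}=\chi_P(\delta)\,\varphi_{\gamma_i\delta}$ you derive via the left-action (cocycle) property is exactly what the paper obtains by expanding the factors $D(\gamma_i,\gamma\tau)^{-2}$ explicitly, and both arguments then conclude by applying Proposition \ref{welldefined} to the translated representatives $\{\gamma_i\delta\}$ together with the parity reduction $\chi_P(\delta)^{15}=\chi_P(\delta)$. The only differences are presentational: you phrase the computation in slash-operator language and spell out holomorphy, the weight count, and the right-coset bookkeeping, which the paper leaves implicit.
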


\begin{proof}
The transformation formula $\varphi(\gamma \tau) =  \chi_P(\gamma) \, D(\gamma , \tau)^{30}\varphi(\tau)$ has to be proved whenever $\gamma \in \Gamma_2$. Let thus $\gamma_1, \dots , \gamma_{15}$ be a fixed collection of coset representatives of $\Gamma_{2,0}(2)$ in $\Gamma_2$; then, for each $\gamma \in \Gamma_2$ one has:
\begin{equation*}
\begin{split}
\varphi(\gamma \tau) & = \prod_{i=1}^{15}\varphi_{\gamma_i}(\gamma \tau) = \prod_{i=1}^{15}\chi_P(\gamma_i)^{-1}D(\gamma_i , \gamma \tau)^{-2} \, F_{M_0}(\gamma_i\gamma\tau)=\\
& = \prod_{i=1}^{15}\chi_P(\gamma_i)^{-1} \, D(\gamma_i \gamma , \tau)^{-2} \, D(\gamma , \tau)^2 \, F_{M_0}(\gamma_i\gamma\tau)=\\
& = D(\gamma , \tau)^{30} \prod_{i=1}^{15}\chi_P(\gamma) \, \varphi_{\gamma_i\gamma}(\tau)=\\
& = \chi_P(\gamma)^{15}  D(\gamma , \tau)^{30} \prod_{i=1}^{15}\varphi_{\gamma_i\gamma}(\tau) = \, \, \chi_P(\gamma) \, D(\gamma , \tau)^{30} \varphi(\tau)
\end{split}
\end{equation*}
\ni the last equality being due to Proposition \ref{welldefined}.
\end{proof}

\medskip

\medskip

\begin{coro}
There exists $\lambda \in \mathds{C}^*$ such that $\varphi = \lambda \, (azy)_5$.
\end{coro}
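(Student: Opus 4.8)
The plan is to recognize $\varphi$ as a member of a one-dimensional space that is already pinned down by Note \ref{note1}, and then to rule out the degenerate case by a non-vanishing argument. By Theorem \ref{modular}, $\varphi$ is a modular form of weight $30$ with respect to $\Gamma_2$ carrying the character $\chi_P$. First I would pass to the subgroup $\Gamma_2^+ = Ker\,\chi_P$: since $\chi_P$ is by definition trivial on $\Gamma_2^+$, the function $\varphi$ is in particular an ordinary (character-free) modular form of weight $30$ with respect to $\Gamma_2^+$, and under the full group $\Gamma_2$ it transforms precisely through the non-trivial character $\chi_P$. This places $\varphi$ in exactly the class of forms singled out in Note \ref{note1}.

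Next I would invoke Note \ref{note1} itself, which states that $(azy)_5$ is \emph{the unique} modular form of weight $30$ with respect to $\Gamma_2^+$ admitting a non-trivial character under the action of $\Gamma_2$. Interpreted linearly, this asserts that the space of weight-$30$ forms on $\Gamma_2^+$ on which $\Gamma_2$ acts via the non-trivial character $\chi_P$ is one-dimensional and spanned by $(azy)_5$. As both $\varphi$ and $(azy)_5$ lie in this space, there is a scalar $\lambda \in \mathds{C}$ with $\varphi = \lambda\,(azy)_5$.

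It then remains only to check that $\lambda \neq 0$, that is, that $\varphi \not\equiv 0$. Here I would exploit the explicit product structure $\varphi = \prod_{i=1}^{15}\varphi_{\gamma_i}$, where each factor $\varphi_{\gamma_i}(\tau) = \chi_P(\gamma_i)^{-1}\, D(\gamma_i,\tau)^{-2}\, F_{M_0}(\gamma_i\tau)$ is, up to the nowhere-vanishing automorphy factor $D(\gamma_i,\tau)^{-2}$ and the nonzero constant $\chi_P(\gamma_i)^{-1}$, a translate of $F_{M_0} = P_2 = \prod_{m'\in\mathds{Z_2}^2}\Theta_{m'}$. Since the second order Theta constants $\Theta_{m'}$ are not identically zero, neither is $F_{M_0}$, hence no $\varphi_{\gamma_i}$ vanishes identically; being a finite product of nonzero holomorphic functions on the connected domain $\mathfrak{S}_2$, the function $\varphi$ is therefore not identically zero, so $\lambda \in \mathds{C}^*$.

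The step I expect to require the most care is this last non-vanishing verification that forces $\lambda \neq 0$ rather than merely $\lambda \in \mathds{C}$: the proportionality is an immediate consequence of the one-dimensionality recorded in Note \ref{note1}, whereas confirming $\varphi \not\equiv 0$ rests on the concrete description of its factors and on the fact that products of second order Theta constants do not vanish identically.
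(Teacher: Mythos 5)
Your proposal follows essentially the same route as the paper: Theorem \ref{modular} places $\varphi$ among the weight-$30$ modular forms with respect to $\Gamma_2^+$ transforming with the non-trivial character under $\Gamma_2$, and the uniqueness recorded in Note \ref{note1} (which rests on Theorem \ref{structure30}) then gives the proportionality $\varphi = \lambda\,(azy)_5$. Your additional verification that $\varphi \not\equiv 0$ --- via the product structure of $\varphi$ in terms of translates of $F_{M_0} = P_2$ and the non-triviality of the second order Theta constants --- is a point the paper's one-line proof leaves implicit, and it is a worthwhile refinement, since without it one could only conclude $\lambda \in \mathds{C}$ rather than $\lambda \in \mathds{C}^*$.
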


\begin{proof}
By Theorem \ref{modular} $\varphi$ is a modular form of weight $30$ with respect to $\Gamma_2^+$ transforming with a non-trivial character under the action of $\Gamma_2$; hence, the thesis follows by Theorem \ref{structure30} and Note \ref{note1}.
\end{proof}

\medskip

\ni Such an expression for the modular form $(azy)_5$ has been recently found by Gehre and Krieg in a different way by means of quaternionic Theta constants (cf. \cite{Krieg}).

\medskip

\addcontentsline{toc}{chapter}{Bibliography}

\end{document}